\newtheorem{thm}{Theorem}
\newtheorem{lem}{Lemma}
\newtheorem{cor}{Corollary}
\newtheorem{ass}{Assumption}
\begin{document}
\title{Strong Duality for Generalized Trust Region Subproblem:
S-Lemma with Interval Bounds
\thanks{This research was
supported by Beijing Higher Education Young Elite Teacher Project (29201442),
and by the fund of State Key Laboratory of
Software Development Environment (SKLSDE-2013ZX-13).}
}

\titlerunning{S-Lemma with Interval Bounds }        

\author{  Shu Wang  \and  Yong Xia }


\institute{
 S. Wang         \at
              School of
Mathematics and System Sciences, Beihang University, Beijing,
100191, P. R. China \email{wangshu.0130@163.com }
\and
Y. Xia \at
              State Key Laboratory of Software Development
              Environment, LMIB of the Ministry of Education,
              School of
Mathematics and System Sciences, Beihang University, Beijing,
100191, P. R. China
              \email{dearyxia@gmail.com}           
 }

\date{Received: date / Accepted: date}

\maketitle

\begin{abstract}
With the help of the newly developed S-lemma with interval bounds, we show that strong duality holds for the interval bounded generalized trust region subproblem under some mild assumptions, which answers an open problem raised by Pong and Wolkowicz [Comput. Optim. Appl. 58(2), 273-322, 2014].

 \keywords{S-lemma \and trust region subproblem  \and strong duality }
\subclass{90C20, 90C22, 90C26}
\end{abstract}

\section{Introduction}

Consider the interval bounded
generalized trust region subproblem:
\begin{eqnarray*}
({\rm GTRS})~&\inf& f(x) \\
&{\rm s.t.}& \alpha\le h(x)\le \beta,
\end{eqnarray*}
where $\alpha\le \beta\in \Bbb R$, $f(x)$ and $h(x)$ are quadratic functions, i.e.,
\begin{eqnarray*}
f(x):=x^{T}Ax+2a^{T}x+c,\label{ff}\\
h(x):=x^{T}Bx+2b^{T}x+d,\label{fh}
\end{eqnarray*}
$A,B\in\Bbb R^{n\times n}$ are symmetric matrices, $a,b\in\Bbb R^{n}$, $c,d\in\Bbb R$.

When $B = I$, $b = 0$, and $\alpha\le0$, (GTRS) is known as the classical trust region subproblem (TRS), which
arises in  trust region methods for nonlinear programming \cite{CG}.
Though (TRS) is explicitly non-convex as $A$ is not necessarily positive semidefinite,
the necessary and sufficient optimality
condition  has been derived, see \cite{G81,M83}.
This makes sense as actually  (TRS) enjoys the strong duality \cite{F96,F04,RW}.

When  $\alpha=-\infty$, (GTRS) reduces to the quadratic programming with a single inequality quadratic constraint (QP1QC), see \cite{H,yz} and references therein. Under the primal Slater condition that there is an $\widetilde{x}$ such that $h(\widetilde{x})< \beta$,
the necessary and sufficient optimality
conditions was derived in \cite{M93} and
the strong duality for (QP1QC) is actually due to the well-known S-lemma, see the survey paper \cite{PT}.

When $\alpha=\beta$, (GTRS) is the quadratic programming with a single equality quadratic constraint (QP1EQC). Under the primal Slater condition that there are $x'$ and $x''$ such that $h(x')< \beta<h(x'')$,
the necessary and sufficient optimality condition
was established in \cite{M93}. Suppose $B$ is definite, (QP1EQC) admits the exact semi-definite programming relaxation \cite{yz}. Very recently, the strong duality for (QP1EQC) is guaranteed by the new developed S-lemma with equality
\cite{XWS}.

The two-sided constrained problem (GTRS) was first introduced in \cite{SW}, where $b = 0$ is assumed. Under the further assumption that  $A$ and $B$ are simultaneously diagonalizable via congruence (SDC) \cite{Horn}, the hidden convexity of (GTRS) was observed \cite{B96}.
Very recently, (GTRS) have been extensively and deeply studied \cite{PW}. In particular, strong duality for (GTRS) was established under the following assumptions:
\begin{ass}[\cite{PW}]\label{as0}~
\begin{itemize}
\item[1.] $B \neq 0$.
\item[2.] (GTRS)  is feasible.
\item[3.] The following relative interior constraint qualification holds
\[
{\rm(RICQ)}~  \alpha< B\bullet\widehat{X}+2b^T\widehat{x}+d<\beta,~ {\rm for~ some}~ \widehat{X} \succ \widehat{x}\widehat{x}^T.
\]
\item[4.] (GTRS) is bounded below.
\item[5.] (D-GTRS) is feasible.
\end{itemize}
 \end{ass}
Assumption \ref{as0} is reasonable due to the following facts.
\begin{thm}[\cite{PW}]\label{thm:1}
The following holds for the Items in Assumption \ref{as0}.
\begin{itemize}
\item[(i)] If one of the Items 1, 2, 3 in Assumption \ref{as0} fails, then an explicit solution of (GTRS) can easily be
obtained.
\item[(ii)] If Items 1, 2, 3 in Assumption \ref{as0} hold and  $ b = 0$, then Item 4 implies Item 5.
\item[(iii)] Item 5 in Assumption \ref{as0} implies Item 4.
\end{itemize}
\end{thm}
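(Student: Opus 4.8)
The plan is to establish parts (i)--(iii) separately, since they are largely independent, and to concentrate almost all of the work on part (ii). Part (iii) is nothing but weak Lagrangian duality: if a point is feasible for (D-GTRS), its dual objective value $\theta$ is a finite real number, and the Lagrangian bound gives $\theta\le f(x)$ for every $x$ with $\alpha\le h(x)\le\beta$; hence $\inf(\mathrm{GTRS})\ge\theta>-\infty$, which is Item 4. Note this uses neither $b=0$ nor Items 1--3.

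For part (i) I would argue by cases on which of Items 1--3 fails. If Item 2 fails, (GTRS) is infeasible, and infeasibility is detected explicitly: $\mathrm{range}(h)$ is an interval whose finite endpoints are $d-b^{T}B^{\dagger}b$ (present iff $B\succeq0$ and $b\in\mathrm{range}(B)$) and the symmetric quantity for $-h$, so one checks $[\alpha,\beta]\cap\mathrm{range}(h)=\emptyset$ and returns an infeasibility certificate. If Item 1 fails, $B=0$ and $h$ is affine, so either $h\equiv d$ and (GTRS) is an unconstrained quadratic minimization, or its feasible set is a slab between two parallel hyperplanes; in either case $\inf(\mathrm{GTRS})=-\infty$ along an explicitly identifiable recession direction, or it is attained and equals the least of the unconstrained minimum (when it is feasible) and the two equality-constrained quadratic subproblems over the bounding hyperplanes, each of which is solved by a single linear system. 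If Items 1--2 hold but Item 3 fails, use the identity $B\bullet X+2b^{T}x+d=h(x)+B\bullet(X-xx^{T})$: taking $X=xx^{T}+tI$ with $t>0$ shows the lifted range is all of $\mathbb{R}$ whenever $B$ is indefinite and $\alpha<\beta$, so RICQ can fail only if $\alpha=\beta$ --- in which case (GTRS) is a quadratic program with one equality quadratic constraint and its solution follows from the classical optimality condition \cite{M93} --- or if $B$ is semidefinite and $\beta$ (respectively $\alpha$) equals the extreme value of $h$, in which case the feasible set collapses to the affine set $\{x:Bx+b=0\}$ and (GTRS) becomes a quadratic program with linear equality constraints, again solved in closed form.

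Part (ii) is the heart of the matter. With $b=0$, the Lagrangian of (GTRS) with multipliers $\lambda_{1},\lambda_{2}\ge0$ for $h(x)\ge\alpha$ and $h(x)\le\beta$ equals $f(x)+(\lambda_{2}-\lambda_{1})h(x)+\lambda_{1}\alpha-\lambda_{2}\beta$, so (D-GTRS) is feasible exactly when there is a real $t=\lambda_{2}-\lambda_{1}$ with $A+tB\succeq0$ and $a\in\mathrm{range}(A+tB)$. I would obtain this via the semidefinite relaxation of (GTRS): by the Schur complement, the requirement $\widehat{X}\succ\widehat{x}\widehat{x}^{T}$ in RICQ is strict feasibility of the lifted positive-semidefinite constraint and the strict inequalities in RICQ are strict feasibility of the two linear constraints, so RICQ is precisely Slater's condition for this relaxation; and when $b=0$ the relaxation is \emph{exact}, which is the hidden convexity of (GTRS) with $b=0$ and rests on a Dines-type convexity and closedness property of the joint range $\{(f(x),h(x)):x\in\mathbb{R}^{n}\}$ (cf.\ \cite{SW,B96,PW}). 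Hence Item 4 forces the relaxation to be bounded below, and then conic duality (Slater together with a finite primal value) yields that its dual, which coincides with (D-GTRS), is feasible, indeed solvable with no duality gap. A more hands-on route shows where $b=0$ genuinely enters: for $v\in\ker B$ the point $x_{0}+sv$ stays feasible for all $s$ whenever $x_{0}$ is feasible (here $b=0$ is used), so Item 4 forces $v^{T}Av\ge0$ and, combined with RICQ, $Av=0$; this gives $A|_{\ker B}\succeq0$, hence $A+tB\succeq0$ for large $t$ when $B$ is semidefinite, and $a\perp(\ker A\cap\ker B)=\ker(A+tB)$ for $t$ in the interior of $\{t:A+tB\succeq0\}$, which is the range condition, while the indefinite-$B$ case reduces, after homogenization, to the same joint-range convexity.

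The step carrying all the content, and the one I expect to be the main obstacle, is the exactness of the semidefinite relaxation of (GTRS) when $b=0$ (equivalently, nonemptiness of $\{t:A+tB\succeq0\}$ together with the correct range condition). The hypothesis $b=0$ is indispensable here: without it the joint range of $(f,h)$ need not be convex, the relaxation may fail to be exact, and (GTRS) can be bounded below while (D-GTRS) is infeasible --- which is exactly why \cite{PW} must retain Item 5 as an independent hypothesis in general, and why strong duality for interval-bounded (GTRS) is proved under Assumption \ref{as0} rather than under Items 1--4 alone. The remaining difficulties are only bookkeeping: the degenerate subcases in part (i) and the boundary analysis of $\{t:A+tB\succeq0\}$ in the hands-on version of part (ii).
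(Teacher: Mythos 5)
First, a point of comparison: the paper does not prove Theorem~\ref{thm:1} at all --- it is quoted from \cite{PW} as background --- so there is no internal proof to measure you against; the closest related material here is the Corollary after Theorem~\ref{thm:m}, which derives the equivalence of Items 4 and 5 under Items 1 and 3 alone (no $b=0$ needed) from the new S-lemma with interval bounds. Your part (iii) is correct (plain weak duality) and part (i) is acceptable in spirit, modulo loose ends (attainment in the slab case, and the degenerate $\alpha=\beta$/collapsed-feasible-set subcases). The problem is part (ii), which you yourself identify as carrying all the content, and where both routes you offer have genuine gaps.

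Route 1 rests on ``when $b=0$ the relaxation is exact,'' justified by ``a Dines-type convexity and closedness property of the joint range $\{(f(x),h(x)):x\in\mathbb{R}^n\}$.'' That justification is false: with $b=0$ but $a\neq0$ this joint range need not be convex (take $n=1$, $f(x)=2x$, $h(x)=x^2$: the range is a parabola), nor closed in general, and the hidden-convexity results you cite (\cite{SW,B96}) require homogeneity or the SDC condition, which you do not have. Moreover, the exactness/strong-duality statement available at the level of \cite{PW} (Theorem~\ref{thm:2}) already assumes Item 5, so invoking it to \emph{derive} Item 5 is circular; an Item-5-free exactness proof is exactly the hard content (it is what Theorem~\ref{thm:two} of this paper supplies). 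Route 2 contains a false intermediate step: boundedness below plus RICQ gives $v^TAv\ge0$ for $v\in\ker B$, and $Av=0$, $a^Tv=0$ only for those $v\in\ker B$ with $v^TAv=0$; your blanket ``combined with RICQ, $Av=0$'' is not right, and the implication ``$A|_{\ker B}\succeq0$, hence $A+tB\succeq0$ for large $t$ when $B$ is semidefinite'' fails without the kernel refinement (e.g.\ $B=\mathrm{diag}(1,0)$, $A=\left[\begin{smallmatrix}0&1\\1&0\end{smallmatrix}\right]$: $A+tB$ is never PSD). The refinement you actually have would repair the semidefinite-$B$ case via a Schur complement (and makes the interior of $\{t:A+tB\succeq0\}$ nonempty, which your range argument silently needs), but you never carry this out; and the indefinite-$B$ case --- the genuinely hard one --- is dismissed as ``reduces, after homogenization, to the same joint-range convexity,'' i.e.\ deferred to the unjustified claim of Route 1. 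So part (ii) is not established as written; to fix it you either need an independent, Item-5-free duality argument for $b=0$ (as in \cite{PW}), or you may simply invoke Theorem~\ref{thm:m} of this paper, whose proof does not use Theorem~\ref{thm:1}.
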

However, it is still unknown whether Item 4 implies Item 5 when $b\neq 0$, see Remark 2.2 \cite{PW}.

Before presenting the strong duality result, we need some definitions. First, introducing one free Lagrange multiplier $\mu$ yields the
following Lagrange function:
\[
L(x,\mu_+,\mu_-)= f(x) + \mu_{-} (h(x)-\beta)+\mu_{+}(\alpha-h(x)),
\]
where $\mu_{+}=\max\{\mu,0\},\mu_{-}=-\min\{\mu,0\}$.
Then, we can write down the Lagrangian dual problem of (GTRS):
\begin{eqnarray*}
({\rm D\text{-}GTRS})~  \sup_{\mu} \left\{ \inf_x L(x,\mu_+,\mu_-)\right\}
&=&
\sup~  c+\mu d-\mu_{-}\beta+\mu_{+}\alpha-s \\
&&{\rm s.t.}\left[\begin{array}{cc}
A+\mu B& a+\mu b\\
a^T+\mu b^T& s
\end{array}\right]
\succeq0,
\end{eqnarray*}
which is viewed as the dual
semidefinite programming (SDP) relaxation for (GTRS). The primal form of SDP relaxation for (GTRS) can be
obtained by lifting $x\in \Bbb R^n$ to $X:=xx^T\in \Bbb R^{n\times n}$. Relaxing $X=xx^T$ to $X\succeq xx^T$ yields the following primal
SDP relaxation problem:
\begin{eqnarray*}
({\rm SDP\text{-}GTRS})~&\inf&  A\bullet X +2a^{T}x+c \\
&{\rm s.t.}&  \alpha\le  B\bullet X +2b^{T}x+d\le \beta,\\
&& X\succeq xx^T,
\end{eqnarray*}
where the final inequality is equivalent to the linear matrix inequality (LMI)
\[
\left[\begin{array}{cc}1&x^T\\x&X
\end{array}\right]\succeq 0,
\]
according to Schur complement argument.
One can verify that (SDP-GTRS)
is also the conic dual of (D-GTRS).

Let $v(\cdot)$ denote the optimal value of
the problem $(\cdot)$. We have the following strong duality result.
\begin{thm}[\cite{PW}]\label{thm:2}
Under Assumption \ref{as0},  strong duality holds for both ${\rm(GTRS)}$ and ${\rm (SDP\text{-}GTRS)}$, i.e.,
\[
 v{\rm(GTRS)}=v{\rm(D\text{-}GTRS)}=v{\rm(SDP\text{-}GTRS)}.
\]
Moreover, $v{\rm (SDP\text{-}GTRS)}$ is attained.
\end{thm}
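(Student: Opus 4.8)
\medskip
\noindent\emph{Proof outline.}~ The plan is to derive the strong duality chain from an S-lemma with interval bounds, exactly as strong duality for ${\rm(QP1QC)}$ follows from the classical S-lemma. Weak duality is automatic: relaxing $X=xx^T$ to $X\succeq xx^T$ gives $v{\rm(SDP\text{-}GTRS)}\le v{\rm(GTRS)}$, and since ${\rm(D\text{-}GTRS)}$ is the conic dual of ${\rm(SDP\text{-}GTRS)}$ we get $v{\rm(D\text{-}GTRS)}\le v{\rm(SDP\text{-}GTRS)}$. Moreover ${\rm(RICQ)}$ says precisely that ${\rm(SDP\text{-}GTRS)}$ is strictly feasible: $\widehat X\succ\widehat x\widehat x^T$ is equivalent, by a Schur complement, to $\begin{bmatrix}1&\widehat x^T\\ \widehat x&\widehat X\end{bmatrix}\succ0$, and with $\alpha<B\bullet\widehat X+2b^T\widehat x+d<\beta$ this is a Slater point, so conic duality already gives $v{\rm(SDP\text{-}GTRS)}=v{\rm(D\text{-}GTRS)}$. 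Hence everything reduces to the exactness of the relaxation, $v{\rm(SDP\text{-}GTRS)}=v{\rm(GTRS)}$, and to the attainment of $v{\rm(SDP\text{-}GTRS)}$.

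For exactness, if Item~1 of Assumption~\ref{as0} fails then Theorem~\ref{thm:1}(i) already settles ${\rm(GTRS)}$, so assume $B\neq0$; by Items~2 and~4 the value $\gamma:=v{\rm(GTRS)}$ is finite, and replacing $f$ by $f-\gamma$ we may take $\gamma=0$, so that $\{\,\alpha\le h(x)\le\beta,\ f(x)<0\,\}$ has no solution. The decisive step, which I would isolate as the S-lemma with interval bounds, is that under ${\rm(RICQ)}$ this inconsistency is equivalent to the existence of a \emph{finite} $\mu\in\Bbb R$ with
\[
f(x)+\mu_-(h(x)-\beta)+\mu_+(\alpha-h(x))\ge0\qquad\text{for all }x\in\Bbb R^n .
\]
I would prove this through the convex set
\[
\mathcal D:=\left\{\,(A\bullet X+2a^Tx+c,\ B\bullet X+2b^Tx+d):\ \begin{bmatrix}1&x^T\\ x&X\end{bmatrix}\succeq0\,\right\}\subseteq\Bbb R^2 .
\]
A Dines-type argument upgrades the no-solution property of the two quadratics to the fact that $\mathcal D$ misses the open box $\{(u,t):u<0,\ \alpha\le t\le\beta\}$, after which a separating line $u+\kappa(t-\tau)\ge0$ on $\mathcal D$ (with $\tau\in\{\alpha,\beta\}$) exists, and reading off $\kappa$ gives the multiplier. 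Since a quadratic is nonnegative on $\Bbb R^n$ if and only if its associated $(n+1)\times(n+1)$ matrix is positive semidefinite, a Schur complement converts the displayed inequality into a feasible point of ${\rm(D\text{-}GTRS)}$ of value $\ge 0=\gamma$ (in the original data); together with weak duality this forces $v{\rm(GTRS)}=v{\rm(D\text{-}GTRS)}=v{\rm(SDP\text{-}GTRS)}$, and since that point in particular shows ${\rm(D\text{-}GTRS)}$ is feasible, Item~5 is implied by Items~1--4, which answers the open problem of \cite{PW}.

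I expect the main obstacle to lie entirely within this S-lemma. It is a genuinely two-sided statement---the joint range of $f$ and $h$ must avoid a strip, not a half-plane---and one must guarantee a \emph{finite} aggregating multiplier rather than one that works only in the limit $|\mu|\to\infty$. This asymptotic degeneracy is exactly the phenomenon behind the implication ``Item~4 $\Rightarrow$ Item~5'' for $b\neq0$ left open in \cite{PW}; ${\rm(RICQ)}$ is what excludes it, since it forbids the separating line above from being vertical (a vertical separator would force $\mathcal D\subseteq\{t\ge\beta\}$ or $\mathcal D\subseteq\{t\le\alpha\}$, contradicting the strict inequalities in ${\rm(RICQ)}$); the borderline case $\alpha=\beta$ is the quadratic equality constraint covered by the S-lemma with equality of \cite{XWS}.

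It remains to show $v{\rm(SDP\text{-}GTRS)}$ is attained. The multiplier $\mu$ constructed above is, by weak duality, optimal for ${\rm(D\text{-}GTRS)}$; fix such an optimal $(\mu^*,s^*)$. By dual feasibility $L(\cdot,\mu^*_+,\mu^*_-)$ is a convex quadratic bounded below, hence it attains its minimum at some $\bar x$, and that minimum equals $v{\rm(GTRS)}$. If $\bar x$ is feasible for ${\rm(GTRS)}$ and tight at the bound selected by $\mu^*$, then $(\bar x,\bar x\bar x^T)$ is optimal for ${\rm(SDP\text{-}GTRS)}$. Otherwise one moves $\bar x$ along a direction in $\ker(A+\mu^*B)$ until the active bound is met, which does not change the value of $L$ (the classical ``hard case'' step), and in the few remaining degenerate configurations one assembles an optimal $X^*$ of rank at most two from two such boundary points, by a standard rank-reduction argument. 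This gives the attainment and completes the outline.
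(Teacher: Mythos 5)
Your architecture is the right one (and is essentially how this paper derives its own, stronger, Theorem~\ref{thm:m}): weak duality plus conic duality under the Slater point given by (RICQ), and then a reduction of exactness to an S-lemma with interval bounds, whose multiplier simultaneously certifies feasibility of (D-GTRS) and so resolves ``Item~4 $\Rightarrow$ Item~5''. The genuine gap is that the engine of this reduction --- the interval S-lemma itself --- is not proved but only asserted, and the way you assert it does not work. Your key sentence is that ``a Dines-type argument upgrades the no-solution property of the two quadratics to the fact that $\mathcal{D}$ misses the open box''. But $\mathcal{D}$ is the SDP-lifted set, i.e.\ the image of $\{(x,X):X\succeq xx^T\}$; the statement that infeasibility of the \emph{nonconvex} system $\{f(x)<0,\ \alpha\le h(x)\le\beta\}$ forces the \emph{lifted convex} set $\mathcal{D}$ to avoid the box is precisely the exactness claim you are trying to prove, not a consequence of Dines' theorem (which concerns the joint range of two homogeneous forms; joint-range convexity fails for general inhomogeneous pairs, and even when the joint range misses the box, $\mathcal{D}$ need not). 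Indeed the claim is false without $B\neq0$: in the exceptional case of Theorem~\ref{thm:two} ($B=0$, $b\neq0$, $A$ with exactly one negative eigenvalue, \eqref{pd} holding) one has $v{\rm(GTRS)}\ge 0$ while $v{\rm(SDP\text{-}GTRS)}=v{\rm(D\text{-}GTRS)}=-\infty$, so $\mathcal{D}$ does meet the box although the joint range does not. You do assume $B\neq0$ via Theorem~\ref{thm:1}(i), but then the implication still requires a substantive argument; the paper supplies it in Theorem~\ref{thm:two} by slicing the interval constraint into the equalities $h(x)=s$, $s\in[\alpha,\beta]$, invoking the S-lemma with equality of \cite{XWS} to get a multiplier $\mu(s)$ for each slice, and then a sign analysis of $\mu(\alpha),\mu(\beta)$ (together with the classical S-lemma) to extract one finite multiplier for the two-sided constraint. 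Nothing in your outline replaces that step, and your heuristic that (RICQ) ``forbids a vertical separator'' only addresses how to read off the multiplier \emph{after} the separation is available.

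Two smaller points. First, the statement you were asked to prove is the cited result of \cite{PW} under the full Assumption~\ref{as0}, including attainment of $v{\rm(SDP\text{-}GTRS)}$; the paper itself does not reprove this (its own Theorem~\ref{thm:m} instead obtains the equality chain from Items~1 and~3 alone and proves attainment of $v{\rm(D\text{-}GTRS)}$ under Item~4, by standard SDP duality from the strict feasibility of (SDP\text{-}GTRS)). Your argument for primal SDP attainment --- minimize the Lagrangian at the optimal multiplier, then correct feasibility along $\ker(A+\mu^*B)$ and, ``in the few remaining degenerate configurations'', assemble a rank-two $X^*$ --- is only a sketch: the hard case where no kernel direction changes $h$ appropriately, and the case $\mu^*=0$ with an infeasible unconstrained minimizer, are exactly where the work lies, and they are not handled. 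Second, your conic-duality step $v{\rm(SDP\text{-}GTRS)}=v{\rm(D\text{-}GTRS)}$ needs finiteness of $v{\rm(SDP\text{-}GTRS)}$, which under Assumption~\ref{as0} follows from Item~5 (or, in your scheme, only after the S-lemma step has produced a dual feasible point), so the order of the argument should make that dependence explicit.
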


In this paper,  Theorems \ref{thm:1} and \ref{thm:2} are both extended. More precisely, we prove that Item 4 implies Item 5 when $b\neq 0$, which answers the open question remained in Theorem \ref{thm:1}. For Theorem \ref{thm:2}, we show that Items 1 and 3 in Assumption \ref{as0} are actually sufficient to guarantee the strong duality for (GTRS). As a by-product, Item 2 is redundant since it can be implied by Item 3. The above new results
are presented in Section 3. Actually, they are applications of the newly developed S-lemma with interval bounds, which is completely
characterized in Section 2. Conclusions are made in Section 4.

Throughout the paper,  the notations $\Bbb R^n$ and $S^n_{+}$ denote the $n$-dimensional vector space and $n\times n$ positive semidefinite symmetric
matrix space, respectively.
Denote by $A\succ(\succeq) 0$ the matrix $A$ is positive  (semi)definite.
The inner product of two matrices $A, B$ is denoted by  $A\bullet B=\sum_{i,j=1}^{n}a_{ij}b_{ij}$. Denote by
$\mathcal{N}(B)$  the null  space of $B$.

\section{S-Lemma and Generalization }

The fundamental S-Lemma was first proved by Yakubovich \cite{y71,y77}
in 1971, see recent surveys \cite{D06,PT}.
\begin{thm}[\cite{y71,y77}]\label{thm:ieq}
Under the Slater assumption that
there is an $\overline{x}\in
\mathbb{R}^n$ such that $h(\overline{x}) < 0$,
the system
\begin{equation}
f(x) < 0,~ h(x)\le 0\nonumber
\end{equation}
is unsolvable
if and only if there is a nonnegative number $\mu\ge0$ such that
\begin{equation}
f(x) + \mu h(x)\ge0, ~\forall x\in \mathbb{R}^n. \nonumber
\end{equation}
\end{thm}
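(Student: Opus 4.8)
The plan is to settle one implication by direct substitution and the other by the classical convexity theorem for the joint range of two quadratic forms (Dines' theorem) together with a separating hyperplane argument in $\mathbb{R}^2$. The reverse implication is immediate: if $\mu\ge 0$ satisfies $f(x)+\mu h(x)\ge 0$ for all $x$, then any $x$ with $h(x)\le 0$ obeys $f(x)\ge -\mu h(x)\ge 0$, so $f(x)<0$ cannot hold and the system has no solution. For the forward implication I would assume the system $f(x)<0,\ h(x)\le 0$ is unsolvable and homogenize: put $\widetilde f(x,t)=x^{T}Ax+2t\,a^{T}x+ct^{2}$ and $\widetilde h(x,t)=x^{T}Bx+2t\,b^{T}x+dt^{2}$, which are homogeneous quadratic forms on $\mathbb{R}^{n+1}$, and set $\mathcal K:=\{(\widetilde f(z),\widetilde h(z)):z\in\mathbb{R}^{n+1}\}\subseteq\mathbb{R}^2$. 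By Dines' theorem $\mathcal K$ is a convex cone, and restricting to $t=1$ shows $G:=\{(f(x),h(x)):x\in\mathbb{R}^n\}\subseteq\mathcal K$.

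The next step is the observation that, by unsolvability alone, $\mathcal K$ does not meet the open third quadrant $\{(u,v):u<0,\ v<0\}$: points of $\mathcal K$ with $t\neq 0$ are positive multiples of points of $G$ and so cannot lie there, while if some $t=0$ point $(x^{T}Ax,x^{T}Bx)$ had both coordinates negative then $f$ and $h$ would both tend to $-\infty$ along the ray $tx$ as $|t|\to\infty$, yielding a solution of the system. Thus $\mathcal K$ is a convex cone disjoint from the open third quadrant, and a standard separation argument produces $(\lambda_0,\lambda_1)\neq(0,0)$ with $\lambda_0 u+\lambda_1 v\ge 0$ on $\mathcal K$ while $\lambda_0 u+\lambda_1 v\le 0$ on the closed third quadrant; testing the latter at $(-1,0)$ and $(0,-1)$ forces $\lambda_0\ge 0$ and $\lambda_1\ge 0$. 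At this point I would invoke the Slater assumption: if $\lambda_0=0$ then $\lambda_1>0$, and since $(f(x),h(x))\in\mathcal K$ for every $x$ we get $\lambda_1 h(x)\ge 0$, i.e.\ $h(x)\ge 0$ for all $x$, contradicting $h(\overline{x})<0$. Hence $\lambda_0>0$, and with $\mu:=\lambda_1/\lambda_0\ge 0$ we obtain $f(x)+\mu h(x)\ge 0$ for all $x$, which is the desired multiplier.

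The substantive input here is Dines' theorem, the convexity of $\mathcal K$, which I would cite as a known result rather than reprove; the remaining steps are routine, and it is worth stressing that the Slater hypothesis is used at a single point, precisely to exclude $\lambda_0=0$ (without it the theorem fails, as the example $n=1$, $f(x)=x$, $h(x)=x^{2}$ shows, where the system is unsolvable but no such $\mu$ exists). An alternative route, closer to the setting of the rest of the paper, is to derive the S-lemma from exactness of the single-constraint semidefinite relaxation together with semidefinite strong duality, which is available here because the Slater condition makes the dual problem strictly feasible.
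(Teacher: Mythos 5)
Your proof is correct. Note, however, that the paper does not prove this statement at all: it is quoted as the classical S-lemma of Yakubovich, with the proof delegated to the cited sources (the survey of P\'olik and Terlaky among them), so there is no internal argument to compare against. What you have written is essentially the standard textbook proof from that literature: homogenize $f$ and $h$ on $\mathbb{R}^{n+1}$, invoke Dines' theorem to get convexity of the cone $\mathcal{K}$ of joint values, check that unsolvability (using, for the $t=0$ points, the dominance of the negative quadratic terms along a ray) keeps $\mathcal{K}$ out of the open third quadrant, separate, and use the Slater point only to rule out $\lambda_0=0$; your counterexample $f(x)=x$, $h(x)=x^2$ correctly shows that last step cannot be dispensed with. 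The one point to state carefully if this were written out in full is the normalization of the separating hyperplane: since $0\in\mathcal{K}$ and $0$ lies in the closure of the open third quadrant, the separation constant can be taken to be $0$, which is what lets you test at $(-1,0)$ and $(0,-1)$ to get $\lambda_0,\lambda_1\ge 0$. Your alternative suggestion via exactness of the SDP relaxation is also viable but would be circular in the context of this paper, which uses the S-lemma to establish the SDP strong duality rather than the other way around.
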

Very recently,
the S-lemma with equality, known as a long-standing open problem, has been proved by Xia et al. \cite{XWS}.
\begin{thm}[\cite{XWS}]\label{thm:eq}
Suppose the Slater assumption for equality holds, that is, there are $x',x''\in \Bbb R^n$ such that $h(x')<0<h(x'')$.
Then, except for the case that
$A$ has exactly one
negative eigenvalue, $B=0$, $b\neq 0$ and
\begin{equation}
\left[\begin{array}{cc}V^TAV&V^T(Ax_0+a)\\
(x_0^TA+a^T)V&f(x_0)\end{array}\right]\succeq 0,\nonumber
\end{equation}
where $x_0=-\frac{d}{2b^Tb}b$, $V\in \Bbb R^{n\times (n-1)}$ is the matrix basis of $\mathcal{N}(b):=\{x:~b^Tx=0\}$,
the system
\[
f(x) < 0,~ h(x)= 0
\]
is unsolvable
if and only if there is a number $\mu$ such that
\[
f(x) + \mu h(x)\ge0, ~\forall x\in \mathbb{R}^n.
\]
\end{thm}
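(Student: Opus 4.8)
The ``if'' direction is immediate and needs no hypothesis: if $f(x)+\mu h(x)\ge 0$ for all $x$, then $h(x)=0$ forces $f(x)\ge 0$, so the system $f(x)<0,\ h(x)=0$ is unsolvable. The content is the converse, and the plan is to strip off the easy cases with Yakubovich's inequality S-lemma (Theorem~\ref{thm:ieq}) and then pin down the degenerate configuration that produces the exception. If the one-sided system $f(x)<0,\ h(x)\le 0$ happens to be unsolvable, then, since the Slater point satisfies $h(x')<0$, Theorem~\ref{thm:ieq} yields $\mu\ge 0$ with $f+\mu h\ge 0$, which settles the equality version too; symmetrically, if $f(x)<0,\ h(x)\ge 0$ is unsolvable we use $h(x'')>0$ to obtain $\mu\le 0$. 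So we may assume both one-sided systems are solvable, hence there are $x_1,x_2$ with $f(x_1),f(x_2)<0$ and $h(x_1)<0<h(x_2)$ (strict, since otherwise $x_i$ would solve the equality system).

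Restrict $f$ and $h$ to the line through $x_1$ and $x_2$. On $[x_1,x_2]$ the function $h$ changes sign, so $h(\bar x)=0$ for some interior $\bar x$, and unsolvability gives $f(\bar x)\ge 0$. Since the quadratic $\theta\mapsto f(x_1+\theta(x_2-x_1))$ is negative at $\theta=0,1$ but nonnegative at the interior value corresponding to $\bar x$, its leading coefficient is negative, i.e. $z^TAz<0$ with $z:=x_2-x_1$. Moreover $z^TBz=0$: otherwise $\theta\mapsto h(x_1+\theta z)$ is a genuine quadratic with two distinct real roots (it changes sign between $0$ and $1$), at each of which $f\ge 0$; comparing with the downward parabola $\theta\mapsto f(x_1+\theta z)$ (negative at $0$ and $1$) forces both roots strictly between $0$ and $1$, contradicting $h(x_1)<0<h(x_2)$. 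Thus we obtain a \emph{homogeneous bad direction} $z$: $z^TBz=0$ and $z^TAz<0$.

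The heart of the proof is that such a $z$, together with unsolvability and Slater, forces the excluded configuration, and the key claim is $B=0$. If $B\ne 0$, I would perturb $z$ into a genuine solution of $f(x)<0,\ h(x)=0$, a contradiction: because $z^TBz=0$, along a two-parameter affine family $x=\bar x+sz+rw$ with $\bar x\in\{h=0\}$ the equation $h(x)=0$ is affine in $s$, so it can be solved for $s=s(r)$; for a suitably chosen auxiliary direction $w$ one gets $s(r)$ linear in $r$ while the leading ($r^2$) coefficient of $f$ along the resulting curve is negative, so $f\to-\infty$ on $\{h=0\}$. This is the step I expect to be the real obstacle: the construction of $w$ splits according to whether $Bz=0$ or $Bz\ne 0$, and (when $Bz\ne 0$) according to the rank and signature of $B$, with the degenerate sub-cases --- certain linear functionals vanishing identically on $\{h=0\}$, e.g. $h$ invariant along $z$ --- handled directly. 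Granting $B=0$, the constraint $h(x)=2b^Tx+d$ is affine, Slater forces $b\ne 0$, and unsolvability of $f(x)<0,\ h(x)=0$ says exactly that $f\ge 0$ on $\{x:b^Tx=-d/2\}=x_0+\mathcal{N}(b)$; substituting $x=x_0+Vu$, this is precisely positive semidefiniteness of $\left[\begin{array}{cc}V^TAV&V^T(Ax_0+a)\\(x_0^TA+a^T)V&f(x_0)\end{array}\right]$. In particular $V^TAV\succeq 0$, and since $\dim\mathcal{N}(b)=n-1$ a second negative eigenvalue of $A$ would force a negative direction into $\mathcal{N}(b)$, which is impossible; hence $A$ has exactly one negative eigenvalue (one being present along $z$). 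So we are in the excluded case. Consequently, when the data are not in the excluded case no bad direction can arise, so the ``both one-sided systems solvable'' branch is vacuous, one of those systems was unsolvable, and the required $\mu$ was already produced in the first step.

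For completeness (and to show the exception is genuine), in the excluded configuration the semidefiniteness condition makes $f(x)<0,\ h(x)=0$ unsolvable, while $A\not\succeq 0$ prevents $\left[\begin{array}{cc}A&a+\mu b\\a^T+\mu b^T&c+\mu d\end{array}\right]\succeq 0$, equivalently $f+\mu h\ge 0$, for every $\mu$; so the equivalence really does fail there.
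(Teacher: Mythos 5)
This theorem is not proved in the paper at all; it is quoted from \cite{XWS}, so there is no in-paper argument to compare against, and your attempt has to stand on its own. Its skeleton is sound: the ``if'' direction, the reduction via Theorem~\ref{thm:ieq} to the branch where both one-sided systems are solvable, the extraction of a direction $z=x_2-x_1$ with $z^TAz<0$ and $z^TBz=0$, and the $B=0$ endgame (affine constraint, $b\neq 0$ from Slater, the PSD matrix as exactly ``$f\ge 0$ on $x_0+\mathcal{N}(b)$'', and the eigenvalue count via $\dim\mathcal{N}(b)=n-1$) are all correct.

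The genuine gap is the step you yourself flag as ``the real obstacle'': the claim that $B\neq 0$ together with the bad direction $z$ forces solvability of $f<0,\ h=0$. This is precisely the hard content of the S-lemma with equality (the long-standing open question that \cite{XWS} resolves), and your proposal only sketches it. Concretely, writing $h(\bar x+sz+rw)=s\left[2z^T(B\bar x+b)+2rz^TBw\right]+2rw^T(B\bar x+b)+r^2w^TBw$, your scheme needs $s(r)$ linear in $r$, which requires $z^TBw=w^TBw=0$ and $z^T(B\bar x+b)\neq 0$; such a $w$ need not exist, and in the degenerate situations you mention (e.g.\ $z^T(Bx+b)$ vanishing identically on $\{h=0\}$) the scheme breaks down outright. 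Moreover, even when $s(r)=\lambda r$ is available, the asserted negativity of the $r^2$-coefficient of $f$ along the curve, namely $(\lambda z+w)^TA(\lambda z+w)<0$, is not automatic ($\lambda$ is dictated by $h$, not chosen), and you give no argument for it. Since the non-exceptional conclusion rests entirely on this claim, deferring its case analysis (``I would perturb\dots'', ``handled directly'') leaves the theorem unproved. A workable repair along your lines is an asymptotic one: with $\bar x\in\{h=0\}$, let the $z$-coordinate $t\to\pm\infty$ and correct by $r(t)w$ with $w=Bz$ when $Bz\neq 0$ (then $r(t)$ stays bounded), with any $w$ satisfying $w^TBw\neq 0$ when $Bz=0$, $b^Tz\neq 0$, $B\neq 0$ (then $r(t)=O(\sqrt{|t|})$), and directly when $Bz=0$, $b^Tz=0$ (then $h$ is constant along $z$); in each case $f=t^2z^TAz+o(t^2)\to-\infty$ on $\{h=0\}$. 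As written, however, the central implication ``$B\neq 0$ yields a contradiction'' is asserted rather than established, so the proof is incomplete.
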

In this section,
as a further extension of Theorems \ref{thm:ieq} and \ref{thm:eq},  we characterize the S-lemma with interval bounds, which asks
when the following two statements are equivalent:
\begin{itemize}
\item[(${\rm S_1}$)] The system
\begin{equation}
f(x) < 0,~ \alpha \leq h(x)\leq \beta\label{eq}
\end{equation}
is unsolvable;
\item[(${\rm S_2}$)] There is a number $\mu\in \mathbb{R}$ such that
\[
f(x) + \mu_{-} (h(x)-\beta)+\mu_{+}(\alpha-h(x))\ge0, ~\forall x\in \mathbb{R}^n.
\]
where $\mu_{+}=\max\{\mu,0\},\mu_{-}=-\min\{\mu,0\}.$
\end{itemize}
Since the special cases $\alpha=-\infty$ (or $\beta=+\infty$) and $\alpha=\beta$ have been settled in Theorems \ref{thm:ieq} and \ref{thm:eq}, respectively,
throughout this paper, we can always make the following assumption:
\begin{ass}\label{as:basic}
$
-\infty<\alpha<\beta<+\infty.
$
\end{ass}
The above S-lemma with interval bounds can be regarded as a special case of the general S-procedure \cite{D06}. Actually, Polyak \cite{P98} succeeded in proving a version of S-procedure involving two quadratic functions in the constraint set:
\begin{thm}[\cite{P98}]\label{thm:pr}
Suppose $n \ge 3$, $f_i (x) = x^TA_ix, i = 0, 1, 2$,  real numbers $\alpha_i$, $i =
0, 1, 2$ and there exist $\mu\in\Bbb R^2, x^0 \in\Bbb R^n$ such that
\begin{eqnarray}
\mu_1A_1 + \mu_2A_2 \succ  0, \nonumber\\
f_1(x^0) < \alpha_1, f_2(x^0) < \alpha_2.\nonumber
\end{eqnarray}
Then the system
\[
f_0(x)<\alpha_0 , ~f_1(x) \le \alpha_1, ~f_2(x)\le\alpha_2
\]
has no solution if and only if there exist $\tau_1 \ge 0, ~\tau_2\ge 0$:
\begin{eqnarray*}
 A_0+ \tau_1A_1 + \tau_2A_2 \succeq 0, \\
\alpha_0 + \tau_1\alpha_1 + \tau_2\alpha_2\le0.
\end{eqnarray*}
\end{thm}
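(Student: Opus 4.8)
\emph{Plan.} I would prove the two implications separately. Write $F(x):=(f_0(x),f_1(x),f_2(x))\in\Bbb R^{3}$ and $\alpha:=(\alpha_0,\alpha_1,\alpha_2)$. First I would dispose of the ``if'' direction, which is elementary and uses none of the qualification hypotheses: if $\tau_1,\tau_2\ge 0$ satisfy $A_0+\tau_1A_1+\tau_2A_2\succeq 0$ and $\alpha_0+\tau_1\alpha_1+\tau_2\alpha_2\le 0$ while some $x$ solves the system, then, using $\tau_i\ge 0$, $f_i(x)\le\alpha_i$ ($i=1,2$) and $f_0(x)<\alpha_0$,
\[
0\le x^{T}(A_0+\tau_1A_1+\tau_2A_2)x=f_0(x)+\tau_1f_1(x)+\tau_2f_2(x)<\alpha_0+\tau_1\alpha_1+\tau_2\alpha_2\le 0,
\]
a contradiction, so the system is unsolvable.

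\emph{The converse.} For the ``only if'' direction I would argue by convexity and separation. Assume the system is unsolvable and form the joint range $K:=\{F(x):x\in\Bbb R^{n}\}$. Since each $f_i$ is a homogeneous quadratic form, $F(\sqrt{t}\,x)=tF(x)$ for $t\ge 0$, so $K$ is a cone with $0\in K$. The decisive fact I would invoke is that $K$ is \emph{convex} (indeed closed): this is Polyak's convexity theorem \cite{P98} for three quadratic forms, whose hypotheses---$n\ge 3$ and the existence of $\mu$ with $\mu_1A_1+\mu_2A_2\succ 0$, a positive definite combination with vanishing coefficient on $A_0$---are exactly the standing assumptions. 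Next I would put $W:=\{(w_0,w_1,w_2):w_0>0,\ w_1\ge 0,\ w_2\ge 0\}$ and $G:=K+W$, a nonempty convex set, and check that the system is solvable if and only if $\alpha\in G$; hence $\alpha\notin G$, and a separating hyperplane gives $p=(p_0,p_1,p_2)\ne 0$ with $p^{T}g\ge p^{T}\alpha$ for all $g\in G$.

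\emph{Extracting the multipliers.} From the separating functional I would read off the required data. Taking the $K$-component $0$ and sending each coordinate of $w\in W$ to $+\infty$ forces $p\ge 0$ componentwise; letting $w\to 0$ yields $p^{T}\alpha\le 0$ and $p^{T}k\ge p^{T}\alpha$ for every $k\in K$, which, since $K$ is a cone, sharpens to $p^{T}k\ge 0$ for all $k\in K$, i.e.\ $p_0A_0+p_1A_1+p_2A_2\succeq 0$. Finally I would rule out $p_0=0$: otherwise $(p_1,p_2)\ne 0$, $p_1,p_2\ge 0$, $p_1A_1+p_2A_2\succeq 0$ and $p_1\alpha_1+p_2\alpha_2\le 0$, so evaluating at the Slater point $x^{0}$ and using $f_i(x^{0})<\alpha_i$ gives $0\le p_1f_1(x^{0})+p_2f_2(x^{0})<p_1\alpha_1+p_2\alpha_2\le 0$, absurd. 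Thus $p_0>0$, and $\tau_i:=p_i/p_0\ge 0$ satisfy $A_0+\tau_1A_1+\tau_2A_2\succeq 0$ and $\alpha_0+\tau_1\alpha_1+\tau_2\alpha_2\le 0$ after dividing by $p_0$.

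\emph{Main obstacle.} Everything after ``$K$ is convex'' is routine point/convex-set separation bookkeeping, and it is also where the two qualification hypotheses get used: the positive definite combination $\mu_1A_1+\mu_2A_2\succ 0$ (together with $n\ge 3$) in the convexity step, and the Slater point $x^{0}$ in the step $p_0>0$. The genuine difficulty---and the only place the dimension condition $n\ge 3$ enters---is establishing the convexity of the joint range $K$ from a single positive definite combination of $A_1$ and $A_2$; if quoting Polyak's convexity theorem were not permitted, re-proving that convexity (a nontrivial topological/algebraic argument) would be by far the longest and most delicate part of the whole proof.
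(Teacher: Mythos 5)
Your argument is correct: the ``if'' direction is the standard nonnegative-combination contradiction, and the ``only if'' direction (joint range $K$ of the three forms is convex by Polyak's convexity theorem under $n\ge 3$ and $\mu_1A_1+\mu_2A_2\succ 0$, then separate $\alpha$ from $K+W$, read off $p\ge 0$, $p^Tk\ge 0$ on the cone $K$, and use the Slater point to force $p_0>0$) is exactly how this result is established. Note that the paper itself gives no proof of this theorem --- it is quoted verbatim from Polyak \cite{P98} --- and your outline reproduces Polyak's own proof, with the only nonroutine ingredient being the cited convexity theorem, which is legitimate to invoke here.
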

It should be noted that Theorem \ref{thm:pr} only implies a special case of the S-lemma with interval bounds where $a=b=0$, and $B$ is definite.

Now we can establish the general S-lemma with interval bounds.
Without loss of generality, we make the following assumption:
\begin{ass}\label{as1}
There exists an $\overline{x}\in \mathbb{R}^n$ such that $\alpha< h(\overline{x})< \beta$.
\end{ass}
\begin{thm} \label{thm:two}
Under Assumptions \ref{as:basic} and \ref{as1},  S-lemma with interval bounds  holds except that
$A$ has exactly one negative eigenvalue, $B=0$, $b\neq 0$ and there exists a $\nu\geq0$ such that
\begin{equation}
\left[\begin{array}{ccc}
V^TAV                  & \frac{1}{2b^Tb}V^TAb          & V^Ta\\
\frac{1}{2b^Tb}b^TAV   & \frac{b^{T}Ab}{(2b^Tb)^2}+\nu & \frac{a^{T}b}{2b^Tb}-\frac{\nu}{2}(\alpha+\beta-2d)\\
a^TV & \frac{a^{T}b}{2b^Tb}-\frac{\nu}{2}(\alpha+\beta-2d) &
c+\nu(\alpha-d)(\beta-d)
\end{array}\right]\succeq 0, \label{pd}
\end{equation}
where $V\in \Bbb R^{n\times (n-1)}$ is the
matrix basis of $\mathcal{N}(b)$.
\end{thm}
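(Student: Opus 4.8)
The plan is to separate the easy inclusion from the substantive one and to organize the latter by which one-sided relaxation of the interval constraint is binding, deferring the genuinely two-sided situation to the $S$-lemma with equality (Theorem \ref{thm:eq}). The inclusion $({\rm S}_2)\Rightarrow({\rm S}_1)$ is immediate: if $\mu$ certifies $({\rm S}_2)$ and $\alpha\le h(x)\le\beta$, then $\mu_-(h(x)-\beta)\le 0$ and $\mu_+(\alpha-h(x))\le 0$, so $f(x)\ge 0$.

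For $({\rm S}_1)\Rightarrow({\rm S}_2)$ I would first record two facts about the case $B=0$. First, when $B=0$ the quadratic part of the function appearing in $({\rm S}_2)$ is $A$, so $({\rm S}_2)$ can hold only if $A\succeq 0$. Second, when in addition $b\neq 0$ (so $h$ is affine and non-constant), the constraint $\alpha\le h(x)\le\beta$ is equivalent to the single quadratic inequality $g(x):=(h(x)-\alpha)(h(x)-\beta)\le 0$, and $g(\overline{x})<0$ by Assumption \ref{as1}; hence the classical $S$-lemma (Theorem \ref{thm:ieq}) shows $({\rm S}_1)$ is equivalent to the existence of $\nu\ge 0$ with $f+\nu g\ge 0$ on $\mathbb{R}^n$, and a direct computation---substitute $x=\frac{t}{b^Tb}b+Vy$ with $V$ a basis of $\mathcal{N}(b)$, then rescale $t$---identifies this with \eqref{pd}. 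Since the quadratic part of $f+\nu g$ is $A+4\nu bb^T$, which is positive semidefinite only when $A$ has at most one negative eigenvalue, these two facts together show that the listed exception is genuine: if $A$ has exactly one negative eigenvalue, $B=0$, $b\neq 0$ and \eqref{pd} holds for some $\nu\ge 0$, then $({\rm S}_1)$ holds while $({\rm S}_2)$ fails. They also show that whenever $A\succeq 0$ the exceptional clause of Theorem \ref{thm:eq} (which requires $A$ to have exactly one negative eigenvalue) cannot be invoked.

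Now assume $({\rm S}_1)$ holds and we are not in the listed exception, and split into cases. If $f(x)<0,\ h(x)\le\beta$ has no solution, Theorem \ref{thm:ieq} with Slater point $\overline{x}$ (note $h(\overline{x})<\beta$) yields $\lambda\ge 0$ with $f+\lambda(h-\beta)\ge 0$, and $\mu=-\lambda$ certifies $({\rm S}_2)$. If instead $f(x)<0,\ h(x)\ge\alpha$ has no solution, symmetrically $\mu=\lambda\ge 0$ works. Otherwise both one-sided systems are solvable, so there is $x^{(2)}$ with $f(x^{(2)})<0$ and $h(x^{(2)})\ge\alpha$, and then $({\rm S}_1)$ forces $h(x^{(2)})>\beta$; also $f(x)<0,\ h(x)=\beta$ is unsolvable by $({\rm S}_1)$, and the equality Slater condition holds since $h(\overline{x})<\beta<h(x^{(2)})$. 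Applying Theorem \ref{thm:eq} to $f$ and $h-\beta$: either its exceptional clause holds---which forces $A$ to have exactly one negative eigenvalue, $B=0$, $b\neq 0$, so by the previous paragraph \eqref{pd} holds for some $\nu\ge 0$ and we are in the listed exception, a contradiction---or we obtain $\mu^\beta$ with $f+\mu^\beta(h-\beta)\ge 0$ on $\mathbb{R}^n$; evaluating at $x^{(2)}$, where $f<0$ and $h-\beta>0$, forces $\mu^\beta>0$, so $\mu=-\mu^\beta$ certifies $({\rm S}_2)$.

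The main obstacle is this last case: when neither one-sided constraint is binding, the interval behaves like the equality $h=\beta$, so one must route the argument through the $S$-lemma with equality (itself a recent and delicate result) and then show that its exceptional clause, when triggered here, coincides with the exception claimed for the interval version. The connecting device is the identification---via the linear reparametrization and rescaling---of the block matrix in \eqref{pd} with the homogenization of $f+\nu(h-\alpha)(h-\beta)$; carrying out that identification, and keeping the bookkeeping of $\alpha,\beta,d$ and the rescaling straight, is the one genuinely computational step.
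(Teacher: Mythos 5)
Your proof is correct, and it runs on the same two engines as the paper's---the classical S-lemma (Theorem \ref{thm:ieq}), the S-lemma with equality (Theorem \ref{thm:eq}), and the same homogenization computation behind (\ref{pd})---but the case decomposition is genuinely different. The paper first disposes of the configurations where $[\alpha,\beta]$ is not strictly inside the range of $h$, then splits on the data: when $A\succeq 0$ or $B\neq 0$ it applies Theorem \ref{thm:eq} at \emph{both} endpoints, obtaining multipliers $\mu(\alpha),\mu(\beta)$, and finishes by a sign analysis (the case $\mu(\alpha)\ge 0\ge\mu(\beta)$ resolved by showing $f\ge 0$ everywhere); the degenerate case $A\not\succeq 0$, $B=0$ is isolated up front and characterized by (\ref{pd}), exactly your ``fact 2'' in disguise, since your product reformulation $(h(x)-\alpha)(h(x)-\beta)\le 0$ becomes the paper's $\widetilde h(z)\le 0$ in the coordinates $x=\frac{z}{2b^Tb}b+Vy$. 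You instead trichotomize on which one-sided system $f<0,\ h\le\beta$ or $f<0,\ h\ge\alpha$ is unsolvable, so the peripheral range configurations are absorbed automatically; in the genuinely two-sided case you invoke Theorem \ref{thm:eq} only at $\beta$ and pin the multiplier's sign by the witness $x^{(2)}$ with $f(x^{(2)})<0$ and $h(x^{(2)})>\beta$, which replaces the paper's two-endpoint sign analysis, and you detect the exceptional situation through the exceptional clause of Theorem \ref{thm:eq} itself, converting it into (\ref{pd}) via fact 2 (correctly using only $B=0$, $b\neq 0$ and the one-negative-eigenvalue condition from that clause, not its matrix condition, and using the standing assumption $({\rm S_1})$ to get (\ref{pd})). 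The paper's organization buys a self-contained treatment of the $B=0$ case before any appeal to Theorem \ref{thm:eq}; yours buys a single application of the equality S-lemma with the sign forced by a feasibility witness, which is arguably leaner. The one step you defer---identifying $f+\nu(h-\alpha)(h-\beta)\ge 0$ on $\mathbb{R}^n$ with (\ref{pd})---is exactly the computation the paper carries out, so nothing essential is missing.
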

\begin{proof}
%
Note that it is trivial to verify that (${\rm S_2}$) always implies (${\rm S_1}$). It is sufficient to assume (${\rm S_1}$) holds and then show  (${\rm S_2}$) is also true.

We first assume
 \[
 \alpha\leq\inf_{{x\in \mathbb R^{n}}}h(x)\leq\sup_{{x\in \mathbb R^{n}}}h(x)\leq\beta.
 \]
Then, (${\rm S_1}$) becomes that  $f(x)<0$ is unsolvable. It certainly implies (${\rm S_2}$) holds with the setting $\mu=0$.

Next, we assume exactly one of the following case occurs:
\begin{eqnarray*}
  &&\alpha\leq\inf_{{x\in \mathbb R^{n}}}h(x)<\beta<\sup_{{x\in \mathbb R^{n}}}h(x),\\
  &&\inf_{{x\in \mathbb R^{n}}}h(x)<\alpha<\sup_{{x\in \mathbb R^{n}}}h(x)\leq \beta.
\end{eqnarray*}
Without loss of generality, we assume the first case holds.
Consequently,  the system (\ref{eq}) in
(${\rm S_1}$) is equivalent to
\[
f(x) < 0,~ h(x)\leq \beta
\]
and there is an $\widehat{x}\in \mathbb R^{n}$ such that $h(\widehat{x})<\beta$, i.e.,
Slater condition holds.
According to the S-lemma with inequality (i.e., Theorem \ref{thm:ieq}), (${\rm S_1}$) holds if and only if  there is a number $\nu \ge0$ such that
\[
f(x) + \nu (h(x)-\beta)\ge0, ~\forall x\in \mathbb{R}^n.
\]
It follows that (${\rm S_2}$) holds with $\mu=-\nu$, which finishes the proof.
%
%
%

Now, under Assumption \ref{as1}, it is sufficient to assume
\begin{equation}
\inf_{{x\in \mathbb R^{n}}}h(x)<\alpha<\beta<\sup_{{x\in \mathbb R^{n}}}h(x).
\label{ass:le}
\end{equation}
Firstly, we further assume either $A\succeq 0$ or $B\neq 0$.
Suppose (${\rm S_1}$) holds. Then, for any $s\in[\alpha,\beta]$,
the system
\begin{equation*}
f(x) < 0,~  h(x)-s=0,
\end{equation*}
is unsolvable. Assumption (\ref{ass:le}) implies that there are $x',x''\in \Bbb R^n$ such that $h(x')<\alpha<\beta<h(x'')$. It follows that
\[
h(x')-s<\alpha-s\le 0 \le\beta-s<h(x'')-s.
\]
According to Theorem \ref{thm:eq}, there is a number $\mu(s)$ such that
\begin{equation}
f(x) + \mu(s) (h(x)-s)\ge0, ~\forall x\in \mathbb{R}^n.\label{main}
\end{equation}
\begin{itemize}
\item[(a)] Suppose $\mu(\beta)> 0$. Let $\mu=-\mu(\beta)$. Then $\mu_-=\mu(\beta)$ and
\[
f(x)+\mu_-(h(x)-\beta)\ge 0,~\forall x\in \mathbb{R}^n.
\]
\item[(b)]Suppose $\mu(\alpha)< 0$. Let $\mu=-\mu(\alpha)$. Then $\mu_+=-\mu(\alpha)$ and
\[
f(x)+\mu_+(\alpha-h(x))\ge 0,~\forall x\in \mathbb{R}^n.
\]
\item[(c)]Suppose $\mu(\alpha)\geq 0 \geq\mu(\beta)$.  (\ref{main}) implies that
\begin{eqnarray*}
&&f(x) + \mu(\alpha) (h(x)-\alpha)\ge0, ~\forall x\in \mathbb{R}^n,\\
&&f(x) + \mu(\beta) (h(x)-\beta)\ge0, ~\forall x\in \mathbb{R}^n.
\end{eqnarray*}
According to Theorem \ref{thm:ieq}, both the system
\[
f(x)<0,~  h(x)\le \alpha,
\]
and the system
\[
f(x)<0,~  h(x)\ge \beta,
\]
are unsolvable.  Since (${\rm S_1}$) holds, we have
\[
f(x)\ge 0,~\forall x\in \mathbb{R}^n,
\]
(${\rm S_2}$) holds with $\mu=0$.
\end{itemize}
Therefore,
S-lemma with interval bounds  holds under the assumption either $A\succeq 0$ or $B\neq 0$.

Now we assume $A\not \succeq 0$ and $B=0$. Then, (${\rm S_2}$) cannot hold true.
According to  Assumption (\ref{ass:le}), we have $b\neq 0$.
Notice that
\[
\left\{x\in\Bbb R^n:\alpha\leq h(x)\leq \beta\right\}=\left\{\frac{z}{2b^Tb}b+Vy: z\in [\alpha-d,\beta-d],~y\in \Bbb R^{n-1}\right\}
\]
where $V$ is a matrix basis of $\mathcal{N}(b)$.
Trivially, (${\rm S_1}$) holds if and only if
\begin{equation}
\inf_{h(x)\in [\alpha,\beta]} f(x)\ge 0,\nonumber
\end{equation}
or equivalently,
\[
\inf_{\widetilde{h}(z)\le0,~y\in \Bbb R^{n-1}}\left\{ f\left(\frac{z}{2b^Tb}b+Vy\right)
\right\}\ge 0,
\]
where
\[
\widetilde{h}(z):=(z-(\alpha-d))(z-(\beta-d))=z^2-(\alpha+\beta-2d)z+(\alpha-d)(\beta-d).
\]
Therefore, for any given $y\in \Bbb R^{n-1}$,
the system
\[
f\left(\frac{z}{2b^Tb}b+Vy\right)<0,~ \widetilde{h}(z)\leq0
\]
is unsolvable. Since $\alpha<\beta$,  Slater assumption holds for $\widetilde{h}(z)\le0$. According to  Theorem \ref{thm:ieq},  there exists a $\nu\geq0$
such that
\begin{equation}
f\left(\frac{z}{2b^Tb}b+Vy\right)+\nu \widetilde{h}(z)\geq0.\label{pr:ineq}
\end{equation}
Notice that
\[
f\left(\frac{z}{2b^Tb}b+Vy\right)= \frac{b^{T}Ab}{(2b^Tb)^2}z^2+\frac{a^{T}b}{b^Tb}z+
\frac{z}{b^Tb}b^{T}AVy+2a^{T}Vy+y^{T}V^{T}AVy+c.
\]
(\ref{pr:ineq}) can be rewritten as
\[
\left[\begin{array}{c}y\\z\\1\end{array}\right]^T
\left[\begin{array}{ccc}
V^TAV                  & \frac{1}{2b^Tb}V^TAb          & V^Ta\\
\frac{1}{2b^Tb}b^TAV   & \frac{b^{T}Ab}{(2b^Tb)^2}+\nu & \frac{a^{T}b}{2b^Tb}-\frac{\nu}{2}(\alpha+\beta-2d)\\
a^TV & \frac{a^{T}b}{2b^Tb}-\frac{\nu}{2}(\alpha+\beta-2d) &
c+\nu(\alpha-d)(\beta-d)
\end{array}\right]
\left[\begin{array}{c}y\\z\\1\end{array}\right]\ge 0.
\]
Therefore, under the assumption $A\not \succeq 0$ and $B=0$, (${\rm S_1}$) holds if and only if (\ref{pd}) holds. 
Since $A\not\succeq0$ and $V^TAV\succeq 0$,
it must hold that $A$ has exactly one negative eigenvalue.
\end{proof}


\section{ Strong Duality for (GTRS)}

In this section, we apply the S-lemma with interval bounds to establish strong duality for (GTRS).

We first study the relation between Assumptions \ref{as0} and \ref{as1}.
\begin{lem} \label{lem:1}
Assumption \ref{as1} is equivalent to  Item 3 in Assumption \ref{as0}.
\end{lem}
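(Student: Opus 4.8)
The plan is to use the identity $h(x)=B\bullet(xx^T)+2b^Tx+d$. With it, Item~3 of Assumption~\ref{as0} reads: the value $B\bullet\widehat X+2b^T\widehat x+d$ can be placed strictly inside $(\alpha,\beta)$ while the lifted variable $\widehat X$ is allowed to \emph{strictly} dominate $\widehat x\widehat x^T$, whereas Assumption~\ref{as1} is the same statement restricted to the rank-one choice $\widehat X=\overline x\,\overline x^T$. One implication is an easy perturbation, and the other is a short continuity argument.

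For ``Assumption~\ref{as1} $\Rightarrow$ Item~3'': given $\overline x$ with $\alpha<h(\overline x)<\beta$, I would take $\widehat x:=\overline x$ and $\widehat X:=\overline x\,\overline x^T+\varepsilon I_n$ with $\varepsilon>0$. Then $\widehat X-\widehat x\widehat x^T=\varepsilon I_n\succ0$, and $B\bullet\widehat X+2b^T\widehat x+d=h(\overline x)+\varepsilon\,{\rm tr}(B)$, which stays in $(\alpha,\beta)$ once $\varepsilon$ is small enough; hence (RICQ) holds.

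For ``Item~3 $\Rightarrow$ Assumption~\ref{as1}'': let $\widehat X\succ\widehat x\widehat x^T$ with $\rho:=B\bullet\widehat X+2b^T\widehat x+d\in(\alpha,\beta)$. Diagonalize $\widehat X-\widehat x\widehat x^T=\sum_{i=1}^n\lambda_i u_iu_i^T$ with $\lambda_i>0$ and $u_1,\dots,u_n$ orthonormal, and set $z_i^{\pm}:=\widehat x\pm\sqrt{n\lambda_i}\,u_i$. A short computation (the terms linear in $u_i$ cancel after averaging over the sign) gives $\widehat x=\frac1{2n}\sum_{i=1}^n(z_i^++z_i^-)$ and $\widehat X=\frac1{2n}\sum_{i=1}^n\big(z_i^+(z_i^+)^T+z_i^-(z_i^-)^T\big)$, so $\rho=\frac1{2n}\sum_{i=1}^n\big(h(z_i^+)+h(z_i^-)\big)$ is a convex combination of the $2n$ numbers $h(z_i^{\pm})$. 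Since this convex combination is $<\beta$, some $z_i^{\pm}$, call it $p$, has $h(p)<\beta$; since it is $>\alpha$, some $z_i^{\pm}$, call it $q$, has $h(q)>\alpha$. If $h(p)>\alpha$ (resp.\ $h(q)<\beta$) we finish with $\overline x=p$ (resp.\ $\overline x=q$); otherwise $h(p)\le\alpha$ and $h(q)\ge\beta$, and since $t\mapsto h\big((1-t)p+tq\big)$ is continuous on $[0,1]$ the intermediate value theorem yields a point $\overline x$ on the segment $[p,q]$ with $h(\overline x)=\tfrac{\alpha+\beta}2\in(\alpha,\beta)$.

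The algebraic identities expressing $\widehat x$ and $\widehat X$ through the $z_i^{\pm}$ are routine. The step I expect to be the real content is the last one: upgrading ``a convex combination of the values $h(z_i^{\pm})$ lies in the open interval $(\alpha,\beta)$'' to ``$h$ attains a value in $(\alpha,\beta)$''. Morally, $\rho$ is a ``second moment'' of $h$, hence lies between $\inf_x h(x)$ and $\sup_x h(x)$, so it belongs to the interval $h(\mathbb R^n)$; the clean way to make this rigorous is precisely the intermediate value theorem applied to the continuous quadratic $h$ along a segment, as above.
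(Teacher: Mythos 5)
Your proposal is correct, and one half of it diverges from the paper in an interesting way. The direction Assumption~\ref{as1} $\Rightarrow$ Item~3 is exactly the paper's argument: perturb the rank-one matrix to $\overline x\,\overline x^T+\epsilon I\succ\overline x\,\overline x^T$ and use continuity (the paper phrases it as a limit as $\epsilon\to0$, you compute the value $h(\overline x)+\varepsilon\,\mathrm{tr}(B)$ explicitly; same idea). For the converse, however, the paper argues by contrapositive: if Assumption~\ref{as1} fails then either $\inf_x h(x)\ge\beta$ or $\sup_x h(x)\le\alpha$; in the first case $B\succeq0$ is forced (otherwise $h$ would be unbounded below), hence $B\bullet(\widehat X-\widehat x\widehat x^T)\ge0$ for any $\widehat X\succ\widehat x\widehat x^T$, giving $h(\widehat x)\le B\bullet\widehat X+2b^T\widehat x+d<\beta$, a contradiction; the other case is symmetric. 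You instead give a direct, constructive argument: write $\widehat X-\widehat x\widehat x^T=\sum_i\lambda_iu_iu_i^T$, express $\widehat x$ and $\widehat X$ as averages over the $2n$ points $z_i^\pm=\widehat x\pm\sqrt{n\lambda_i}\,u_i$ (your identities check out, the cross terms indeed cancel), conclude that $\rho=B\bullet\widehat X+2b^T\widehat x+d$ is the average of the values $h(z_i^\pm)$, and then extract a point with $h\in(\alpha,\beta)$ either immediately or via the intermediate value theorem along a segment. Both routes are sound; the paper's is shorter and exploits the structural fact that failure of the interior Slater condition forces $B$ to be semidefinite of one sign, while yours is self-contained (no appeal to that fact), actually exhibits a feasible point $\overline x$, and only uses that $h$ is linear in the lifted pair $(xx^T,x)$ plus continuity, so it would generalize verbatim to other settings where the constraint function is affine in a lifting.
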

\begin{proof}
Suppose Assumption \ref{as1} is violated, we have either $\inf_{{x\in \mathbb R^{n}}}h(x)\geq\beta$ or $\sup_{{x\in \mathbb R^{n}}}h(x)\leq\alpha$.
We first assume $\inf_{{x\in \mathbb R^{n}}}h(x)\geq\beta$. It follows that   $B\succeq0$. For any $\widehat{X} \succ \widehat{x}\widehat{x}^T$, we have $B\bullet(\widehat{X}-\widehat{x}\widehat{x}^T)\geq0$. If Item 3 in Assumption \ref{as0} holds, we obtain the following contradiction:
\[
h(\widehat{x})= B\bullet\left(\widehat{x}\widehat{x}^T\right)+2b^T\widehat{x}+d\leq B\bullet\widehat{X}+2b^T\widehat{x}+d<\beta.
\]
The other case $\sup_{{x\in \mathbb R^{n}}}h(x)\leq\alpha$ can  be similarly discussed.
Consequently, Item 3 of  Assumption \ref{as0} implies Assumption \ref{as1}.

Now we assume Assumption \ref{as1} holds, i.e., there is an $\widehat{x}$ such that $h(\widehat{x})\in(\alpha,\beta)$. 
Define
\[
\widehat{X}(\epsilon)=\widehat{x}\widehat{x}^T+\epsilon I,
\]
where $I$ is the identity matrix.
Then, we have $\widehat{X}(\epsilon)\succ\widehat{x}\widehat{x}^T$ for all $\epsilon>0$, and
\[
\lim_{\epsilon\rightarrow 0}\left\{ B\bullet\left(\widehat{X}(\epsilon)\right)+2b^T\widehat{x}+d\right\}=
h(\widehat{x})\in (\alpha,\beta).
\]
Therefore, there is an $\epsilon_0>0$ such that $\widehat{X}(\epsilon_0)\succ\widehat{x}\widehat{x}^T$ and
\[
\alpha< B\bullet\widehat{X}(\epsilon_0)+2b^T\widehat{x}+d<\beta.
\]
That is, Items 3 of  Assumption \ref{as0} hold. The proof is complete.
 {\hfill \quad{$\Box$} }

As pointed out by one referee,  Item 2 in Assumption \ref{as0} is unnecessary as it can be implied by Item 3 according to Lemma \ref{lem:1}.

%

Now, as a main result of this paper, we extend Theorem \ref{thm:2}.
\begin{thm} \label{thm:m}
Under Items $1$ and $3$ in Assumption \ref{as0},  strong duality holds for both ${\rm(GTRS)}$ and ${\rm (SDP\text{-}GTRS)}$, i.e.,
\[
 v{\rm(GTRS)}=v{\rm(D\text{-}GTRS)}=v{\rm(SDP\text{-}GTRS)}.
\]
Additionally, suppose Item $4$ in Assumption \ref{as0} holds,
v${\rm(D\text{-}GTRS)}$ is attained.
\end{thm}
{\bf {Proof.}}
According to Lemma \ref{lem:1}, Items $1$ and $3$ in Assumption \ref{as0} imply that $B\neq0$ and Assumption \ref{as1}. It follows from Theorem \ref{thm:two} that S-lemma with interval bounds holds.  Then, we have
\begin{eqnarray}
&&v({\rm GTRS})\nonumber \\
&=&
\sup\limits_{s\in\mathbb{R}}
\left\{s\bigg|
\left\{x\in\mathbb{R}^n|f(x)-s<0,\alpha\leq h(x)\leq \beta\right\}=\emptyset\right\}\nonumber\\
&=& \sup\limits_{s,\mu\in\mathbb{R}}
\left\{s\bigg|
f(x)-s+\mu_{-} (h(x)-\beta)+\mu_{+}(\alpha-h(x))\ge 0,\forall x\in\mathbb{R}^n\right\}\nonumber\\
&=& \sup\limits_{s,\mu\in\mathbb{R}}\left\{s\bigg|
\left[\begin{array}{cc}
A+\mu B& a+\mu b\\
a^T+\mu b^T& c+\mu d-\mu_{-}\beta+\mu_{+}\alpha-s
\end{array}\right]
\succeq0\right\}\label{SDP1}\\
&\le & \inf\limits_{X\in S_+^{n+1}}\left\{
\left[\begin{array}{cc}
A & a \\
a^T & c \end{array}\right]\bullet X \bigg|
 \left[\begin{array}{cc}
 B&  b\\
 b^T& d
\end{array}\right]\bullet X \in [\alpha,\beta], X_{n+1,n+1}=1
\right\}\label{SDP2}\\
&\le & \inf\limits_{x\in\mathbb{R}^n}\left\{
\left[\begin{array}{cc}
A & a \\
a^T & c \end{array}\right]\bullet X \bigg|
 \left[\begin{array}{cc}
 B&  b\\
 b^T& d
\end{array}\right]\bullet X \in [\alpha,\beta], X=\left[\begin{array}{cc}
x \\
 1 \end{array}\right]\left[\begin{array}{cc}
x \\
 1 \end{array}\right]^T
\right\} \nonumber \\
&=&v({\rm GTRS}).\nonumber
\end{eqnarray}
It is not difficult to verify that (\ref{SDP1}) and (\ref{SDP2}) are exactly the dual SDP $({\rm D\text{-}GTRS})$ and  primal SDP $({\rm SDP\text{-}GTRS})$, respectively. Thus, the strong duality holds for both ${\rm(GTRS)}$ and ${\rm (SDP\text{-}GTRS)}$.

Now, suppose Item $4$ in Assumption \ref{as0} also holds, i.e., $v({\rm GTRS})>-\infty$. Then, we have $v{\rm (SDP\text{-}GTRS)}=v({\rm GTRS})>-\infty$.
Note that, according to Item $3$ in Assumption \ref{as0},  $({\rm SDP\text{-}GTRS})$
has a strictly feasible solution.
It follows from the standard strong duality theory for SDP that
$v{\rm(D\text{-}GTRS)}$ is attained.
%
\end{proof}

As an immediate corollary of Theorem \ref{thm:m}, we improve Item (ii) in Theorem \ref{thm:1}, which answers the open question raised in \cite{PW} whether Item 4 implies Item 5 when $b\neq 0$.
\begin{cor}
Under Items $1$ and $3$ in Assumption \ref{as0},  Items 4 and 5 are equivalent.
\end{cor}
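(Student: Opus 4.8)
The plan is to split the claimed equivalence into its two implications. The direction ``Item $5 \Rightarrow$ Item $4$'' needs no fresh argument: it is precisely Theorem \ref{thm:1}(iii), which is stated unconditionally and in particular holds under Items $1$ and $3$. Thus the entire substance of the corollary is the reverse implication ``Item $4 \Rightarrow$ Item $5$'', and I would obtain it by reading off the conclusion of Theorem \ref{thm:m}.

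For that reverse implication, assume Items $1$, $3$ and $4$ of Assumption \ref{as0}. Items $1$ and $3$ are exactly the hypotheses of Theorem \ref{thm:m}, so strong duality holds; in particular $v{\rm(GTRS)} = v{\rm(D\text{-}GTRS)}$. Item $4$ states $v{\rm(GTRS)} > -\infty$, whence $v{\rm(D\text{-}GTRS)} > -\infty$. A supremum over an empty feasible set would equal $-\infty$, so the feasible region of (D-GTRS) must be nonempty; equivalently --- and more directly, by the ``additionally'' sentence of Theorem \ref{thm:m} --- $v{\rm(D\text{-}GTRS)}$ is attained at some feasible point, which is exactly Item $5$.

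I do not expect any real obstacle here: the corollary is pure bookkeeping on top of Theorem \ref{thm:m}, the only thing to verify being that the hypotheses line up (Theorem \ref{thm:m} is stated under Items $1$ and $3$, and adds Item $4$ in its second sentence to obtain attainment of $v{\rm(D\text{-}GTRS)}$, which is precisely the regime of this corollary). All the genuine work sits upstream in Theorem \ref{thm:m}, whose proof combines the new S-lemma with interval bounds (Theorem \ref{thm:two}) with the classical SDP strong-duality theorem invoked through the strictly feasible solution of (SDP-GTRS) furnished by Item $3$.
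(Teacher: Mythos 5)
Your proposal is correct and follows essentially the same route as the paper: the paper's proof is the one-line observation that, by Theorem \ref{thm:m}, $v{\rm(GTRS)}=-\infty$ if and only if $v{\rm(D\text{-}GTRS)}=-\infty$, which in turn holds if and only if ${\rm(D\text{-}GTRS)}$ is infeasible (a supremum over an empty set being $-\infty$). Your splitting into two implications, with Theorem \ref{thm:1}(iii) handling ``Item 5 $\Rightarrow$ Item 4'', is just an equivalent repackaging of that same argument.
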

\begin{proof}
According to Theorem \ref{thm:m}, under Items $1$ and $3$ in Assumption \ref{as0}, $v{\rm(GTRS)}=-\infty$ if and only if
$v{\rm(D\text{-}GTRS)}=-\infty$, i.e., ${\rm(D\text{-}GTRS)}$ is infeasible.
\end{proof}

Finally,  Theorem \ref{thm:two} implies that  Item 1 in Assumption \ref{as0} is necessary for strong duality. Actually, when $A$ has exactly one negative eigenvalue, $B=0$, $b\neq 0$ and
there is a real number $\nu\geq0$ satisfying (\ref{pd}),
according to the proof of Theorem \ref{thm:two}, we have
\[
 v{\rm(GTRS)}\ge 0,~ v{\rm(D\text{-}GTRS)}=-\infty.
\]
That is, the duality gap is $+\infty$.

However, in the case $B=0$, duality gap can be closed by
reformulating the constraint $\alpha\leq h(x)\leq\beta$ as $(h(x)-\alpha)(h(x)-\beta)\leq0$, which  corresponds to a special case of Theorem \ref{thm:two} where $\alpha=-\infty$.

%

\section{Conclusion}
In this paper, we have extended the classical S-lemma to the interval bounded S-lemma. As an application, we establish strong duality for the interval bounded generalized trust region subproblem (GTRS) under some mild assumptions. Our assumptions are much weaker than that in \cite{PW}. As a by-product, we answer an open question  posted in \cite{PW}. The future work includes further extensions and/or applications of our S-lemma with interval bounds.


\begin{thebibliography}{99999}

\bibitem{B96}
Ben-Tal, A., Teboulle,  M.:
Hidden convexity in some nonconvex quadratically constrained
quadratic programming, Mathematical Programming. {\bf72},  51--63 (1996)

\bibitem{CG}
Conn, A.R., Gould, N.I.M., Toint, Ph.L.:
Trust-Region Methods. Society for Industrial and Applied
Mathematics (SIAM). Philadelphia (2000)

\bibitem{D06}
 Derinkuyu, K., Pinar, M.\c{C}.:
On the S-procedure and some variants.
Math. Meth. Oper. Res.  {\bf 64}, 55--77 (2006)

\bibitem{F96}
Flippo, O.E., Jansen,B.:
Duality and sensitivity in nonconvex quadratic optimization over an ellipsoid.
Eur. J. Oper. Res. {\bf94}(1), 167--178 (1996)

\bibitem{F04}
Fortin, C.,  Wolkowicz H.:
The trust region subproblem and semidefinite programming. Optim. Methods Softw. {\bf19}(1), 41--67 (2004)

\bibitem{G81}
Gay, D.M.: Computing optimal locally constrained steps. SIAM J. Sci. Stat. Comput. {\bf2}, 186--197
(1981)

\bibitem{Horn}  Horn, R.,   Johnson, C.R.: Matrix Analysis. Cambridge University
Press, Cambridge, UK (1985)

\bibitem{H}
 Hsia, Y.,  Lin, G.X.,  Sheu, R.L.: A revisit to quadratic programming
with one inequality quadratic constraint via matrix pencil.
Pacific J. Optim. {\bf10}(3), 461--481  (2014)

\bibitem{M83}
Mor\'{e}, J.J., Sorensen, D.C.: Computing a trust region step. SIAM J. Sci. Stat. Comput. {\bf4}, 553--572
(1983)

\bibitem{M93}  Mor\'{e}, J.J.: Generalizations of the trust region problem.
Optim. Methods Softw. {\bf2},  189--209 (1993)

\bibitem{PT}  P\'{o}lik, I., Terlaky, T.:
 A Survey of S-lemma. SIAM review. {\bf49}(3), 371--418 (2007)

\bibitem{P98}
Polyak, B.T.: Convexity of quadratic transformations and its use in control and optimization,
Journal of Optimization Theory and Applications. {\bf99}, 553--583 (1998)

\bibitem{PW}
Pong, T.K., Wolkowicz,  H.: Generalizations of the trust region subproblem.
Comput. Optim. Appl. {\bf58}(2), 273--322 (2014)

\bibitem{RW}
Rendl, F., Wolkowicz, H.: A semidefinite framework for trust region subproblems with applications
to large scale minimization. Math. Program. {\bf77}(2, Ser. B), 273--299 (1997)

\bibitem{SW}
Stern, R., Wolkowicz, H.: Indefinite trust region subproblems and nonsymmetric eigenvalue perturbations.
SIAM J. Optim. {\bf5}(2), 286--313 (1995)

\bibitem{XWS}
Xia, Y., Wang, S., Sheu,  R.L.: S-Lemma with Equality and Its Applications.
arXiv:1403.2816 (2014) (http://arxiv.org/abs/1403.2816)


\bibitem{y71}
 Yakubovich, V.A.: S-procedure in nonlinear control theory. Vestnik
Leningrad. Univ.  {\bf1},  62--77 (1971) (in Russian).

\bibitem{y77}
Yakubovich, V.A.: S-procedure in nonlinear control theory. Vestnik
Leningrad. Univ. {\bf4},  73--93 (1977) (English translation).

\bibitem{yz}
 Ye, Y.,   Zhang, S.: New results on quadratic minimization. SIAM J.
Optim.  {\bf14},  245--267 (2003)





\end{thebibliography}
\end{document}